\providecommand{\U}[1]{\protect\rule{.1in}{.1in}}
\newtheorem{theorem}{Theorem}
\newtheorem{lemma}[theorem]{Lemma}
\newtheorem{proposition}[theorem]{Proposition}
\newenvironment{proof}[1][Proof]{\noindent\textbf{#1.} }{\ \rule{0.5em}{0.5em}}
\begin{document}

\title{Defining $\mathbb{A}$ in $G(\mathbb{A})$}
\author{Dan Segal}
\maketitle

\bigskip It is shown in the papers \cite{NST} and \cite{ST} that for many
integral domains $R,$ the ring is bi-intepretable with various Chevalley
groups $G(R)$. The model theory of ad\`{e}le rings and some of their subrings
has attracted some recent interest (\cite{DM}, \cite{D}, \cite{AMO}), and it seemed
worthwhile to extend the results in that direction.

Let $\mathbb{A}$ denote the ad\`{e}le ring of a global field $K$, with
$\mathrm{char}(K)\neq2,3,5$. We consider subrings of $\mathbb{A}$ of the
following kind:%
\begin{align*}
R  &  =\mathbb{A},\\
R  &  =~\prod_{\mathfrak{p}\in\mathcal{P}}\mathfrak{o}_{\mathfrak{p}}%
\end{align*}
where $\mathfrak{o}$ is the ring of integers of $K$ and $\mathcal{P}$ may be
any non-empty set of primes (or places) of $K$. For example, $R$ could be the
whole ad\`{e}le ring of $\mathbb{Q}$, or $\widehat{\mathbb{Z}}=\prod
_{p}\mathbb{Z}_{p}.$

\begin{theorem}
\label{sl2}The ring $R$ is bi-interpretable with each of the groups
$\mathrm{SL}_{2}(R)$, $\mathrm{SL}_{2}(R)/\left\langle -1\right\rangle ,$
$\mathrm{PSL}_{2}(R)$.
\end{theorem}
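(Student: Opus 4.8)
The plan is to establish both halves of a bi-interpretation and then to check that the two round trips are definable isomorphisms. One direction is routine: $\mathrm{SL}_2(R)$ is interpreted in $R$ as the definable set of $2\times 2$ matrices of determinant $1$ under matrix multiplication, and since the centre $Z=\{\lambda I:\lambda^2=1\}$ and the subgroup $\langle -I\rangle=\{\pm I\}$ are both definable in this interpretation, the quotients $\mathrm{SL}_2(R)/\langle -1\rangle$ and $\mathrm{PSL}_2(R)=\mathrm{SL}_2(R)/Z$ are interpreted in $R$ as well. I would stress at the outset that for the rings considered here $Z$ is genuinely larger than $\langle -I\rangle$: since $Z\cong\{\lambda\in R:\lambda^2=1\}$ and $R=\prod_{\mathfrak p}\mathfrak o_{\mathfrak p}$ has many square roots of $1$ (a choice of sign at each place), $Z$ is huge and in fact encodes the Boolean algebra of idempotents, whereas $\langle -1\rangle$ has order $2$. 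Hence the three target groups are pairwise non-isomorphic in general and each requires its own interpretation of $R$; I would treat $\mathrm{SL}_2(R)$ first and then push the argument through the two central quotients.

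For the substantial direction — interpreting $R$ in $G=\mathrm{SL}_2(R)$ — I would first recover the standard pieces purely group-theoretically: a root (elementary unipotent) subgroup $U^+=\{x_{12}(t):t\in R\}$ with its opposite $U^-$, the diagonal torus $T=\{h(u):u\in R^\times\}$ where $h(u)=\mathrm{diag}(u,u^{-1})$, a Weyl element $n$, and the Borel $B=N_G(U^+)$. Over a field these are classical; the care needed here is that $R$ has zero divisors and idempotents, so ``unipotent'', ``root subgroup'' and ``torus'' must be captured by formulas (via centralisers, normalisers and the maximal-abelian structure) that do not presuppose $R$ is local or a domain. The additive group of $R$ is then just $(U^+,\cdot)\cong(R,+)$. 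A further point I would nail down early is that $\mathrm{SL}_2(\mathfrak o_{\mathfrak p})$ is generated by boundedly many elementary matrices, uniformly in $\mathfrak p$ (Gauss/Bruhat over a local ring, with a uniform bound also at any archimedean places present for $R=\mathbb A$); this uniform bounded generation is what lets the recovered ring first-order coordinatise all of $\mathrm{SL}_2(R)$, even though $R$ is a large (restricted) product, and it is what will make the $G\to R\to\mathrm{SL}_2(R)$ round trip definable.

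The crux is defining the multiplication of $R$ on $U^+\cong(R,+)$, and here the rank-one nature of $\mathrm{SL}_2$ is the main obstacle. In $\mathrm{SL}_n$ with $n\ge 3$ the Chevalley relation $[x_{ij}(a),x_{jk}(b)]=x_{ik}(ab)$ hands one the product directly; for $\mathrm{SL}_2$ there is no third index and no such relation, and conjugation by the torus realises only the maps $t\mapsto u^2t$, i.e.\ multiplication by squares of units — never by an arbitrary ring element, in particular never by a non-unit such as a uniformiser. I would break this twisting in one of two complementary ways, taking whichever is cleaner to formalise: either (i) read off products from the Bruhat/big-cell coordinates — on the definable big cell $\Omega=U^-TU^+$ the factorisation is unique, so for $g=x_{12}(a)x_{21}(b)$ the $(1,1)$-entry $1+ab$ is accessible through $T$, and with $2\in R^\times$ one untwists the square to pin down the graph $\{(a,b,ab)\}$ by a first-order condition; or (ii) interpret the projective line as the definable set of conjugates of $U^+$ (equivalently of Borels), on which $G$ acts, and coordinatise it by the classical von Staudt-type constructions of addition and multiplication from the action fixing $0,1,\infty$. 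The hypothesis $\mathrm{char}\,K\neq 2,3,5$ is exactly what I expect to be consumed here: to invert $2$ when undoing the squaring, and to avoid the small-characteristic degeneracies of these coordinate constructions and of the exceptional behaviour of $\mathrm{SL}_2$ and $\mathrm{PSL}_2$. Proving that the resulting operation is total, first-order, and genuinely equal to multiplication on all of $R$ (not merely on the units) is the single hardest step.

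Finally I would descend to the two quotients and assemble the bi-interpretation. Since $U^+\cap Z=1$, each root subgroup embeds into $\mathrm{SL}_2(R)/\langle -1\rangle$ and into $\mathrm{PSL}_2(R)$, so the additive group of $R$ survives intact; I would then verify that the group-theoretic definitions of $U^+,U^-,T,n,B$ and the multiplication construction all descend modulo the centre, the delicate point being that in $\mathrm{PSL}_2(R)$ one sees only cosets and has quotiented out the large group $Z\cong\mu_2(R)$ carrying the idempotent data — yet the idempotents of $R$ are recovered from the ring structure on $U^+$ itself rather than from $Z$. With $R$ interpreted in each of the three groups, and $\mathrm{SL}_2(R)$ coordinatised by the recovered ring via bounded generation, it remains to check that $R\to G\to\widetilde R$ is an $R$-definable ring isomorphism and that $G\to R\to\widetilde G$ is a $G$-definable group isomorphism, for each $G$ in the list; these are the two defining conditions of bi-interpretability.
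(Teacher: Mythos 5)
Your outer architecture is right (matrices give the interpretation of each group in $R$; a unipotent root group carries $(R,+)$; the whole difficulty is the multiplication; then a bounded-generation-type argument coordinatises the group for the round trip), and you correctly isolate the obstruction that torus conjugation $u(\lambda)^{h(\mu)}=u(\lambda\mu^{2})$ only realises multiplication by squares of units. But at exactly the point you flag as ``the single hardest step'' your proposal stops, and both repairs you sketch would fail for these rings. Route (i): the big-cell factorisation of $x_{12}(a)x_{21}(b)$ exists only when $1+ab\in R^{\ast}$, and in $R=\prod_{\mathfrak{p}}\mathfrak{o}_{\mathfrak{p}}$ this fails for typical pairs ($1+ab$ need only be a non-unit at a single place), so you pin down the graph of multiplication only on a proper definable subset, and extending it to all of $R$ is precisely the unsolved problem. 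Worse, your assumption $2\in R^{\times}$ is simply false for $R=\widehat{\mathbb{Z}}$ and for any $R$ containing a place above $2$; this is why the paper's torus parameter is $h(\tau)$ with $\tau_{\mathfrak{p}}=2$ for $\mathfrak{p}\nmid 2$ and $\tau_{\mathfrak{p}}=3$ for $\mathfrak{p}\mid 2$, and the hypothesis $\mathrm{char}(K)\neq 2,3,5$ concerns residue fields (it makes Lemma \ref{S-lemma} work, notably in the function-field case), not invertibility of $2$ in $R$. Route (ii), von Staudt coordinates on the conjugates of $U^{+}$, founders on the same rings: with idempotents and zero divisors everywhere there is no usable notion of points in general position. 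The missing idea is the paper's Lemma \ref{S-lemma}: there is a \emph{finite} $S\subset R$ (via CRT and Hensel at the places over $30$) such that every $\alpha\in R$ equals $\xi^{2}-\eta^{2}+s$ with $\xi,\eta\in R^{\ast}$, $s\in S$. This makes $U=u(R)$ definable, as $u(\alpha)=u^{h(\xi)}u^{-h(\eta)}u(s)$, and then \emph{bilinearity} turns the square-twisted conjugation action into full multiplication: writing also $\beta=\zeta^{2}-\rho^{2}+t$, one has
\[
u(\beta\alpha)=u(\beta)^{h(\xi)}u(\beta)^{-h(\eta)}u(s)^{h(\zeta)}u(s)^{-h(\rho)}u(st),
\]
so the graph of $\ast$ is first-order with the finitely many parameters $u(c)$, $c\in S^{2}$. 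That one lemma does the work you assign to big cells and projective lines, and nothing in your sketch substitutes for it.

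Two secondary points. Your appeal to uniform bounded elementary generation of $\mathrm{SL}_{2}(\mathfrak{o}_{\mathfrak{p}})$ for the $G\to R\to G$ round trip is in the right spirit (over a local ring $a$ or $c$ in the first column is a unit, so the elementary width is uniformly bounded), but you miss the genuinely adelic subtlety: which Bruhat cell a component lies in varies with $\mathfrak{p}$, so the correcting Weyl element cannot be taken globally constant. The paper handles this with the definable set $W=\{x\mid x_{\mathfrak{p}}\in\{1,w\}\ \text{for all } \mathfrak{p}\}$ (definable as $x=yz^{w}y$, $x^{4}=1$ with $y,z\in u(R_{\{0,1\}})$, where $R_{\{0,1\}}$ is cut out by $X(X-1)=0$), together with $\Gamma_{1}=\{g\mid g_{11}\in R^{\ast}\}=VHU$ and the definable factor maps $\widetilde{v},\widetilde{h},\widetilde{u}$; every $g$ satisfies $gx\in\Gamma_{1}$ for some $x\in W$ because at almost all places one of $a_{\mathfrak{p}},b_{\mathfrak{p}}$ is a unit. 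Finally, your insistence that the three groups ``each require their own interpretation'' is overstated: the paper runs one argument uniformly for $\Gamma=\mathrm{SL}_{2}(R)/Z$ with $Z\in\{1,\langle-1\rangle,\mathrm{centre}\}$, using only that $\lambda\mapsto u(\lambda)$ stays injective modulo $Z$ (your observation that $U^{+}\cap Z=1$ is the same point, and your remark that the full centre is large, $\cong\mu_{2}(R)$, is correct but plays no role in the proof).
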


\begin{theorem}
\label{high_rank}Let $G$ be a simple Chevalley-Demazure group scheme of rank
at least $2$. Then $R$ is bi-interpretable with the group $G(R)$.
\end{theorem}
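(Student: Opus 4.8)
The plan is to establish interpretability in both directions and then check that the two round‑trip interpretations are definable in the respective structures. One direction is essentially formal: since $G$ is an affine group scheme defined over $\mathbb{Z}$, the group $G(R)$ sits inside some $R^{n}$ as the zero set of a fixed finite system of integer polynomials, with multiplication and inversion given by polynomials. Hence $G(R)$ is interpretable in the ring $R$ uniformly in $R$, and the matrix entries of an element of $G(R)$ are recovered from the ambient copy of $R$ by the coordinate projections, which will make the round trip $R\rightsquigarrow G(R)\rightsquigarrow R$ transparently definable.

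The substance is the reverse interpretation of $R$ in $G(R)$, and here I would exploit that $\mathrm{rank}(G)\geq 2$. Choose roots $\alpha,\beta$ of the root system $\Phi$ with $\alpha+\beta\in\Phi$; such a pair exists precisely because the rank is at least $2$. The Chevalley commutator relations give, in the simply‑laced case, $[x_{\alpha}(s),x_{\beta}(t)]=x_{\alpha+\beta}(\pm st)$, and in general a controlled product of root elements with monomial exponents. The domain of the interpreted ring is a single root subgroup $U_{\alpha}\cong(R,+)$: addition is the group operation $x_{\alpha}(s)x_{\alpha}(t)=x_{\alpha}(s+t)$, while multiplication is extracted from the commutator map $U_{\alpha}\times U_{\beta}\to U_{\alpha+\beta}$ after transporting $U_{\beta}$ and $U_{\alpha+\beta}$ back to $U_{\alpha}$ by suitable torus or Weyl elements. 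Because these relations are uniform in $R$ and act componentwise on a product, the same formulas recover $R=\prod_{\mathfrak{p}}\mathfrak{o}_{\mathfrak{p}}$ (or $\mathbb{A}$) as a ring, with no fibrewise analysis needed.

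The crux is to pin down the root subgroups, and the individual root elements, by first‑order group‑theoretic conditions with parameters. Over fields this is classical, but the rings here are neither domains nor fields: they are rich in idempotents and zero divisors, so $G(R)=\prod_{\mathfrak{p}}G(\mathfrak{o}_{\mathfrak{p}})$ contains elements that are unipotent in some coordinates and semisimple in others, and the characterisations of root elements that rely on $R$ being a field or domain — via eigenvalues, or via centralisers of regular unipotents — are unavailable. I would therefore isolate root subgroups through their commutator and centraliser behaviour relative to the chosen parameters, and I expect the main obstacle to be making this characterisation uniform across the whole family of rings while controlling the idempotent structure.

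A related technical point is that, to make the second round trip $G(R)\rightsquigarrow R\rightsquigarrow G(R)$ definable, I must express a general element of $G(R)$ definably in terms of the coordinatising root subgroups, i.e.\ a first‑order Gauss or Bruhat decomposition. This reduces to bounded generation of $G(R)$ by elementary (root) elements with a length bound independent of $\mathfrak{p}$; establishing such a uniform bound for these products of complete local rings, and handling any deviation between $G(R)$ and its elementary subgroup, is where I expect the real work to lie.
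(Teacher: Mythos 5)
Your overall architecture matches the paper's: interpret $G(R)\subseteq R^{d^{2}}$ in $R$ as the zero set of integer polynomials, take a single root subgroup $U_{\gamma}(R)\cong(R,+)$ as the domain of the interpreted ring, and make the round trips definable via a bounded decomposition of $G(R)$ into root subgroups. Your mechanism for the ring multiplication, however, is genuinely different: you propose the classical rank-$\geq 2$ commutator trick $[x_{\alpha}(s),x_{\beta}(t)]=x_{\alpha+\beta}(\pm st)\cdots$ with transport back to $U_{\alpha}$ by Weyl or torus elements. The paper never uses a pair of roots for this; instead it shows $K_{\gamma}=\mathrm{SL}_{2}(R)\varphi_{\gamma}$ equals the bounded product $U_{-\gamma}(R)U_{\gamma}(R)\cdots U_{\gamma}(R)$ of eight root-subgroup factors (Lemma \ref{Klemma}), hence is definable, and then invokes Proposition \ref{P1}, which defines multiplication inside a single unipotent group using torus conjugation $u(\lambda)^{h(\mu)}=u(\lambda\mu^{2})$ together with Lemma \ref{S-lemma}, writing every $r\in R$ as $\xi^{2}-\eta^{2}+s$ with $\xi,\eta\in R^{\ast}$ and $s$ in a fixed finite set. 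Your commutator route is viable in principle (with care about the structure constants $\pm2,\pm3$ in the doubly and triply laced cases, manageable here since $2$ and $3$ are non-zero-divisors in $R$), but the paper's route buys wholesale reuse of the Theorem \ref{sl2} machinery.

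The genuine gap is that the two steps you explicitly defer --- definably pinning down the root subgroups over these idempotent-rich rings, and a uniform bounded elementary decomposition --- are precisely the content of the theorem, and you give no argument for either. The paper closes both by exploiting that $R$ is a product of local domains, so centralisers, centres and factorizations compute componentwise with formulas and bounds uniform in $\mathfrak{p}$. First, for any domain $S$ with infinitely many units one has $U_{\alpha}(S)Z(S)=\mathrm{Z}\left(C_{G(S)}(v)\right)$ for $1\neq v\in U_{\alpha}(S)$ (\cite{ST}, Theorem 1.5); choosing the parameter $u_{\alpha}$ to have component $x_{\alpha}(1)$ at \emph{every} $\mathfrak{p}$ --- nontrivial in each coordinate, which is exactly what neutralises your worry about elements that degenerate on part of the idempotent spectrum --- this identity passes verbatim to $R$, and the argument of \cite{ST}, Corollary 1.6 (using that $R/2R$ is finite) strips off the central factor, making $U_{\alpha}(R)$ itself definable. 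Second, Steinberg (\cite{S}, Ch.\ 7, Corollary 2 to Theorem 18) yields that over a PID, $G$ is a product of boundedly many of the $K_{\alpha}$, with a bound $N_{1}$ depending only on the root system; since each $\mathfrak{o}_{\mathfrak{p}}$ is a PID and the bound is uniform in $\mathfrak{p}$, one gets $G(R)=\prod_{i=1}^{N}U_{\beta_{i}}(R)$ with $N=8nN_{1}$, which simultaneously disposes of your concern about a deviation between $G(R)$ and its elementary subgroup: componentwise, and hence globally, there is none. Without inputs of this kind your text is a correct plan rather than a proof; identifying the obstacles is not the same as surmounting them.
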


The special cases where $R=\mathfrak{o}_{\mathfrak{p}}$ were established in
\cite{NST}, \S 4 and \cite{ST}.

\bigskip

For a rational prime $p$ we write $R_{p}=\prod_{\mathfrak{p}\in\mathcal{P}%
,~\mathfrak{p}\mid p}\mathfrak{o}_{\mathfrak{p}}$.

\begin{lemma}
\label{S-lemma}$R$ has a finite subset $S$ such that every element of $R$ is
equal to one of the form%
\begin{equation}
\xi^{2}-\eta^{2}+s \label{formula}%
\end{equation}
with $\xi,\eta\in R^{\ast}$ and $s\in S$.
\end{lemma}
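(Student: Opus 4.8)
The plan is to reduce the statement to a purely local one at each prime and then reassemble. Writing $\xi^{2}-\eta^{2}=(\xi-\eta)(\xi+\eta)$ and using $\mathrm{char}(K)\neq2$, the substitution $u=\xi-\eta$, $v=\xi+\eta$ (so $\xi=(u+v)/2$, $\eta=(u-v)/2$) shows that representing an element $b$ as a difference of two squares of units amounts to factoring $b=uv$ with $u+v$ and $u-v$ both units. The first goal is therefore the local claim: for each prime $\mathfrak{p}$ there is a finite set $S_{\mathfrak{p}}\subseteq\mathfrak{o}_{\mathfrak{p}}$, with $S_{\mathfrak{p}}=\{0\}$ for all but finitely many $\mathfrak{p}$, so that every element of $\mathfrak{o}_{\mathfrak{p}}$ has the form $\xi^{2}-\eta^{2}+s$ with $\xi,\eta\in\mathfrak{o}_{\mathfrak{p}}^{\ast}$ and $s\in S_{\mathfrak{p}}$.

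I would first dispose of the \emph{good} primes, those whose residue field $\mathbb{F}_{q}$ has odd characteristic and satisfies $q\geq7$. Over $\mathbb{F}_{q}$ the number of solutions of $\bar{b}=\bar\xi^{2}-\bar\eta^{2}$ with $\bar\xi,\bar\eta\neq0$ is at least $q-5>0$ (count the $q-1$ factorizations $uv=\bar b$ and discard the at most four with $\bar\xi=0$ or $\bar\eta=0$), so every residue is a difference of two nonzero squares. Fixing such an $\bar\eta$ and lifting it to a unit $\eta$, the element $b+\eta^{2}$ is a unit whose residue $\bar\xi^{2}$ is a nonzero square; since the residue characteristic is odd, Hensel's lemma makes $b+\eta^{2}$ the square of a unit $\xi$, giving $b=\xi^{2}-\eta^{2}$. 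Thus $S_{\mathfrak{p}}=\{0\}$ works at every good prime, and in the function-field case $\mathrm{char}(K)=p>5$ forces $q\geq p\geq7$ everywhere, so there all primes are good.

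The substantive case is the finitely many \emph{bad} primes in the number-field setting: those above $2,3,5$ with small residue field, where either the residue field is too small for the count or (above $2$) the residue characteristic is even and $2$ is a non-unit, so neither the factoring trick nor the naive Hensel step applies. The main device is that the squares $(\mathfrak{o}_{\mathfrak{p}}^{\ast})^{2}$ form an open subgroup of $\mathfrak{o}_{\mathfrak{p}}^{\ast}$: there is an integer $N$ (one may take $N>2v_{\mathfrak{p}}(2)$) with $1+\pi^{N}\mathfrak{o}_{\mathfrak{p}}\subseteq(\mathfrak{o}_{\mathfrak{p}}^{\ast})^{2}$. Consequently, if $b\equiv\xi_{0}^{2}-\eta_{0}^{2}\pmod{\pi^{N}}$ for some units $\xi_{0},\eta_{0}$, then, fixing such an $\eta_{0}$, the element $b+\eta_{0}^{2}$ lies in the coset $\xi_{0}^{2}(1+\pi^{N}\mathfrak{o}_{\mathfrak{p}})$ and is therefore the square of a unit, so $b$ really is a difference of two unit squares. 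Hence the set $D_{\mathfrak{p}}$ of such differences is a union of cosets of $\pi^{N}\mathfrak{o}_{\mathfrak{p}}$, i.e. the full preimage of a subset of the finite ring $\mathfrak{o}_{\mathfrak{p}}/\pi^{N}\mathfrak{o}_{\mathfrak{p}}$; choosing a finite $S_{\mathfrak{p}}\subseteq\mathfrak{o}_{\mathfrak{p}}$ whose image covers that finite quotient modulo $D_{\mathfrak{p}}$ settles the bad primes. I expect this openness-of-squares step, together with pinning down the level $N$ at the even primes, to be the main obstacle.

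Finally I would globalize. In the product case $R=\prod_{\mathfrak{p}\in\mathcal{P}}\mathfrak{o}_{\mathfrak{p}}$ the units are exactly the componentwise units, so assembling the local data for $a=(a_{\mathfrak{p}})$ gives $a=\xi^{2}-\eta^{2}+s$ with $\xi,\eta\in R^{\ast}$ and $s$ in the finite set $S=\prod_{\mathfrak{p}\ \mathrm{bad}}S_{\mathfrak{p}}$ (extended by $0$ on the good coordinates). In the adelic case $R=\mathbb{A}$ the units are the ideles, which may be arbitrary nonzero elements of $K_{v}$ at finitely many places; since at the finitely many archimedean, bad, or non-integral places the completion is an infinite field in which every element is a difference of two nonzero squares, while at the cofinitely many good places one uses the unit representation above, one obtains $a=\xi^{2}-\eta^{2}$ outright, so here $S=\{0\}$ suffices.
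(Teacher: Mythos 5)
Your proof is correct and follows essentially the same route as the paper: large odd residue fields are handled by the difference-of-two-nonzero-squares fact in the residue field plus a Hensel lift, the finitely many small or even primes by coset representatives at a finite level where squares are open (the paper's cosets of $4\mathfrak{p}$ at $\mathfrak{p}\mid 2$ sit at exactly your level $N>2v_{\mathfrak{p}}(2)$), and the local data are reassembled componentwise. Your write-up is in fact more explicit than the paper's at two points it leaves implicit, namely the openness-of-squares justification at the bad primes and the treatment of non-integral and archimedean components in the adelic case, where your observation that $S=\{0\}$ already suffices for $R=\mathbb{A}$ is a correct small strengthening.
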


\begin{proof}
In any field of characteristic not $2$ and size $>5$, every element is the
difference of two non-zero squares. It follows that the same is true for each
of the rings $\mathfrak{o}_{\mathfrak{p}}$ with $N(\mathfrak{p})>5$ and odd.

If $N(\mathfrak{p})$ is $3$ or $5$ then every element of $\mathfrak{o}%
_{\mathfrak{p}}$ is of the form (\ref{formula}) with $\xi,\eta\in
\mathfrak{o}_{\mathfrak{p}}^{\ast}$ and $s\in\{0,\pm1\}.$ If $\mathfrak{p}$
divides $2$, the same holds if $S$ is a set of representatives for the cosets
of $4\mathfrak{p}$ in $\mathfrak{o}$.

Now by the Chinese Remainder Theorem (and Hensel's lemma) we can pick a finite
subset $S_{1}$ of $R_{2}\times R_{3}\times R_{5}$ such that every element of
$R_{2}\times R_{3}\times R_{5}$ is of the form (\ref{formula}) with $\xi
,\eta\in\mathfrak{o}_{\mathfrak{p}}^{\ast}$ and $s\in S_{1}$. Finally, let $S$
be the subset of elements $s\in R$ that project into $S_{1}$ and have
$\mathfrak{o}_{\mathfrak{p}}$-component $1$ for all $\mathfrak{p}\nmid30$
(including infinite places if present).
\end{proof}

\medskip

\textbf{Remark} If $K=\mathbb{Q}$ one could choose $S\subset\mathbb{Z}$
(diagonally embedded in $R$). The plethora of parameters in the following
argument can then be replaced by just three - $h(\tau),~u(1),$ $v(1)$ - or
even two when $R=\mathbb{A}$, in which case we replace $h(\tau)$ by $h(2),$
which can be expressed in terms of $u(1)$ and $v(1)$ by the formula
(\ref{h-identity}) below. Also the formula (\ref{P-formula}) can be replaced
by the simpler one: $y_{2}=u^{x}u^{-y}u^{s}\wedge y_{3}=y_{1}^{x}y_{1}%
^{-y}y_{1}^{s}$.

\medskip

For a finite subset $T$ of $\mathbb{Z}$ let
\[
R_{T}=\left\{  r\in R~\mid~r_{\mathfrak{p}}\in T\text{ for every }%
\mathfrak{p}\right\}  .
\]
This is a definable set, since $r\in R_{T}$ if and only if $f(r)=0$ where
$f(X)=\prod_{t\in T}(X-t)$.

\medskip

Choose $S$ as in Lemma \ref{S-lemma}, with $0,~1\in S$, and write $S^{2}=S.S.$

\medskip

Let $\Gamma=\mathrm{SL}_{2}(R)/Z$ where $Z$ is $1$, $\left\langle
-1\right\rangle $ or the centre of $\mathrm{SL}_{2}(R)$. \ For $\lambda\in R$
write%
\[
u(\lambda)=\left(
\begin{array}
[c]{cc}%
1 & \lambda\\
0 & 1
\end{array}
\right)  ,~v(\lambda)=\left(
\begin{array}
[c]{cc}%
1 & 0\\
-\lambda & 1
\end{array}
\right)  ,~~h(\lambda)=\left(
\begin{array}
[c]{cc}%
\lambda^{-1} & 0\\
0 & \lambda
\end{array}
\right)  ~~(\lambda\in R^{\ast})
\]
(matrices interpreted modulo $Z$; note that $\lambda\longmapsto u(\lambda)$ is
bijective for each choice of $Z$).

Fix $\tau\in R^{\ast}$ with $\tau_{\mathfrak{p}}=2$ for $\mathfrak{p}\nmid2,$
$\tau_{\mathfrak{p}}=3$ for $\mathfrak{p}\mid2$. It is easy to verify that%
\begin{equation}
\mathrm{C}_{\Gamma}(h(\tau))=h(R^{\ast}):=H. \label{Hdef}%
\end{equation}

\begin{proposition}
\label{P1}The ring $R$ is definable in $\Gamma.$
\end{proposition}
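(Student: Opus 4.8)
The plan is to interpret $R$ on the unipotent subgroup $U=\{u(\lambda):\lambda\in R\}$: use $H$ to realise multiplication by squares of units as conjugation, and use Lemma~\ref{S-lemma} (applied twice) to reach arbitrary ring elements and arbitrary products. Throughout I write $g^{h}=h^{-1}gh$ for conjugation, and $u=u(1)$.

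First I would exhibit $U$ as a definable set in bijection with $(R,+)$. A direct matrix computation gives, for $\mu\in R^{\ast}$,
\[
u(\lambda)^{h(\mu)}=u(\mu^{2}\lambda),
\]
an identity valid in $\Gamma$ since $Z$ is central and conjugation descends to $\Gamma$. Thus the $H$-conjugates of $u$ are exactly the $u(\mu^{2})$, and by Lemma~\ref{S-lemma} every $u(\lambda)$ equals $u^{x}(u^{-1})^{y}u(s)$ with $x=h(\xi),\,y=h(\eta)\in H$ and $s\in S$. Hence
\[
U=\{\,u^{x}(u^{-1})^{y}u(s)\;:\;x,y\in H,\ s\in S\,\}
\]
is a definable set (parameters $u$, $h(\tau)$ via (\ref{Hdef}), and the finitely many $u(s)$), and $\lambda\mapsto u(\lambda)$ is a definable bijection $R\to U$ by the bijectivity recorded after (\ref{Hdef}). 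Addition on $R$ is then the group operation on the abelian group $U$, and ``multiply by $\mu^{2}$'' is the conjugation $z\mapsto z^{h(\mu)}$.

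The substantive step is multiplication, and the \textbf{main obstacle} is precisely the one term that is not of square-of-unit type. Given $a,b$, choose $\xi,\eta\in R^{\ast}$ and $s\in S$ with $a=\xi^{2}-\eta^{2}+s$, so that $ab=\xi^{2}b-\eta^{2}b+sb$; here $u(\xi^{2}b)=u(b)^{h(\xi)}$ and $u(-\eta^{2}b)=(u(b)^{-1})^{h(\eta)}$ are already definable from $u(b)$, but $sb$ is not. To resolve this I would apply Lemma~\ref{S-lemma} a second time, now to $b=\xi'^{2}-\eta'^{2}+s'$, giving
\[
sb=\xi'^{2}s-\eta'^{2}s+ss'.
\]
The first two summands make $u(\xi'^{2}s)=u(s)^{h(\xi')}$ and $u(\eta'^{2}s)=u(s)^{h(\eta')}$ into $H$-conjugates of the \emph{constant} $u(s)$, while the bilinear cross-term lies in the finite set $S^{2}=S\cdot S$; this is exactly why each $u(\sigma)$ with $\sigma\in S^{2}$ is taken as a parameter. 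Thus $u(sb)=u(s)^{h(\xi')}(u(s)^{-1})^{h(\eta')}u(ss')$ is definable.

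Assembling the pieces, the graph of multiplication $\{(u(b),u(a),u(ab))\}$ is defined (the ``P-formula'') by asserting the existence of $x,y,x',y'\in H$ and a finite disjunction over $s,s'\in S$ such that
\[
u(a)=u^{x}(u^{-1})^{y}u(s),\qquad u(b)=u^{x'}(u^{-1})^{y'}u(s'),
\]
together with
\[
u(ab)=u(b)^{x}\,(u(b)^{-1})^{y}\,u(s)^{x'}\,(u(s)^{-1})^{y'}\,u(ss').
\]
Expanding via $u(\lambda)^{h(\mu)}=u(\mu^{2}\lambda)$ shows the right-hand side equals $u(\xi^{2}b-\eta^{2}b+\xi'^{2}s-\eta'^{2}s+ss')=u(ab)$, as required. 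All parameters used ($h(\tau)$, $u$, and the finitely many $u(s)$ for $s\in S$ and $u(\sigma)$ for $\sigma\in S^{2}$) lie in $\Gamma$, so $R$ with its full ring structure is definable in $\Gamma$. The only delicate point is the treatment of the non-unit term $sb$; once it is absorbed by the second application of Lemma~\ref{S-lemma} and the passage to the finite set $S^{2}$, the remaining verifications are routine, and over $K=\mathbb{Q}$ the constants $u(s)$ collapse to integer powers $u^{s}$, yielding the simplified formula of the Remark.
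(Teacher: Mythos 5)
Your proposal is correct and is essentially the paper's own proof: you use the same parameters ($h(\tau)$, $u(1)$, and the elements $u(c)$ for $c\in S^{2}$), the same conjugation identity $u(\lambda)^{h(\mu)}=u(\lambda\mu^{2})$ together with Lemma~\ref{S-lemma} to define $U$ and $H$, and the same double application of Lemma~\ref{S-lemma} (once to each factor, with the cross-term absorbed into $S^{2}$) to produce exactly the formula (\ref{P-formula}) up to relabelling of variables. No gaps; nothing further is needed.
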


\begin{proof}
We take $h:=h(\tau)$ and $\{u(c)~\mid~c\in S^{2}\}$ as parameters, and put
$u:=u(1)$. `Definable' will mean definable with these parameters. For
$\lambda\in R$ and $\mu\in R^{\ast}$ we have%
\[
u(\lambda)^{h(\mu)}=u(\lambda\mu^{2}).
\]

Now (\ref{Hdef}) shows that $H$ is definable. If $\lambda=\xi^{2}-\eta^{2}+s$
and $x=h(\xi),~y=h(\eta)$ then%
\[
u(\lambda)=u^{x}u^{-y}u(s).
\]
It follows that%
\[
U:=u(R)=\bigcup_{s\in S}\{u^{x}u^{-y}u(s)~\mid~x,~y\in H\}
\]
is definable.

The map $u:R\rightarrow U$ is an isomorphism from $(R,+)$ to $U$. It becomes a
ring isomorphism with multiplication $\ast$ if one defines%
\begin{equation}
u(\beta)\ast u(\alpha)=u(\beta\alpha). \label{thing}%
\end{equation}
We need to provide an $L_{\mathrm{gp}}$ formula $P$ such that for
$y_{1},~y_{2},~y_{3}\in U$,%
\begin{equation}
y_{1}\ast y_{2}=y_{3}\Longleftrightarrow\Gamma\models P(y_{1},y_{2},y_{3}).
\label{nextthing}%
\end{equation}

Say $\alpha=\xi^{2}-\eta^{2}+s$, $\beta=\zeta^{2}-\rho^{2}+t$. Then%
\[
u(\beta\alpha)=u(\beta)^{x}u(\beta)^{-y}u(s)^{z}u(s)^{-r}u(st)
\]
where $x=h(\xi),~y=h(\eta),~z=h(\zeta)~$and $r=h(\rho)$.

So we can take $P(y_{1},y_{2},y_{3})$ to be a formula expressing the
statement: there exist $x,~y,z,r\in H$ such that for some $s,t\in S$%
\begin{align}
y_{1}  &  =u^{z}u^{-r}u(t),~y_{2}=u^{x}u^{-y}u(s),\label{P-formula}\\
y_{3}  &  =y_{1}^{x}y_{1}^{-y}u(s)^{z}u(s)^{-r}u(st).\nonumber
\end{align}

\end{proof}

\begin{proposition}
The group $\Gamma$ is interpretable in $R$.
\end{proposition}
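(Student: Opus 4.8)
The plan is to realise $\Gamma$ as the quotient of a definable subset of $R^{4}$ by a definable equivalence relation, which is the standard way of interpreting a matrix group in its coordinate ring. As the domain of the interpretation I would take
\[
D=\{(a,b,c,d)\in R^{4}~\mid~ad-bc=1\},
\]
a quantifier-free definable subset of $R^{4}$, reading a tuple $(a,b,c,d)$ as the matrix $\left(\begin{smallmatrix}a&b\\c&d\end{smallmatrix}\right)\in\mathrm{SL}_{2}(R)$. Matrix multiplication is given on coordinates by the polynomial map
\[
(a,b,c,d)\cdot(a',b',c',d')=(aa'+bc',\,ab'+bd',\,ca'+dc',\,cb'+dd'),
\]
inversion by $(a,b,c,d)\mapsto(d,-b,-c,a)$, and the identity is $(1,0,0,1)$; all of these are definable in $R$. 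This already interprets $\mathrm{SL}_{2}(R)$ in $R$ and so handles the case $Z=1$.

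For the two remaining choices of $Z$ I would pass to a quotient by a definable equivalence relation on $D$. Since any central element of $\mathrm{SL}_{2}(R)$ must commute with the elementary matrices $u(\lambda),v(\lambda)$, a short computation forces it to be a scalar matrix $\lambda I$ with $\lambda^{2}=1$; thus the centre of $\mathrm{SL}_{2}(R)$ is exactly $\{\lambda I~\mid~\lambda\in R,~\lambda^{2}=1\}$. Hence in each case $Z$ is the set of scalar matrices $\lambda I$ with $\lambda$ ranging over a definable subset $\Lambda$ of $R$, namely $\Lambda=\{1\}$, $\Lambda=\{\pm1\}$, or $\Lambda=\{\lambda\in R~\mid~\lambda^{2}=1\}$ respectively. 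I then set
\[
(a,b,c,d)\,E\,(a',b',c',d')\iff\exists\lambda\in\Lambda~(a'=\lambda a\wedge b'=\lambda b\wedge c'=\lambda c\wedge d'=\lambda d),
\]
which, because $\Lambda$ is closed under multiplication and inverses, is a definable equivalence relation on $D$ whose classes are precisely the cosets of $Z$.

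It remains to check that the operations descend to $D/E$ and that the quotient is isomorphic to $\Gamma$. Descent is immediate: the elements $\lambda I$ are central with $\lambda^{2}=1$, so multiplying either factor by $\lambda I$ multiplies the product by $\lambda I$, and inversion of $\lambda I$ is again of this form; hence the polynomial operations are $E$-invariant and induce definable operations on $D/E$. The map sending the $E$-class of $(a,b,c,d)$ to the image of the corresponding matrix in $\Gamma=\mathrm{SL}_{2}(R)/Z$ is then a well-defined group isomorphism, completing the interpretation.

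This direction is essentially formal, since a matrix group over $R$ is manifestly coordinatised by $R$; the only points that need a word are the identification of the centre of $\mathrm{SL}_{2}(R)$ with a definable set of scalars and the verification that the operations respect $E$. I do not expect any genuine obstacle here, in contrast to Proposition~\ref{P1}, where the difficulty lay in recovering the ring $R$ from the abstract group.
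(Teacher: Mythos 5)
Your proposal is correct and takes essentially the same route as the paper: interpret $\mathrm{SL}_{2}(R)$ as the definable set of determinant-one $2\times2$ matrices (tuples in $R^{4}$) with polynomial group operations, then quotient by the definable equivalence relation of multiplication by a central element, i.e.\ a scalar matrix $\lambda I$ with $\lambda$ in $\{1\}$, $\{\pm1\}$, or $\{\lambda\in R\mid\lambda^{2}=1\}$ according to the choice of $Z$. You simply make explicit the coordinatisation, the identification of the centre, and the descent of the operations to the quotient, all of which the paper's one-line proof leaves implicit.
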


\begin{proof}
When $\Gamma=\mathrm{SL}_{2}(R)$, \ clearly $\Gamma$ is definable as the set
of $2\times2$ matrices with determinant $1$ and group operation matrix
multiplication. For the other cases, it suffices to note that the equivalence
relation `modulo $Z$' is definable by $A\thicksim B$ iff there exists
$Z\in\{\pm1_{2}\}$ with $B=AZ$, resp. $Z\in H$ with $Z^{2}=1$ and $B=AZ$.
\end{proof}

\bigskip

To complete the proof of Theorem \ref{sl2} it remains to establish
\textbf{Step 1 }and \textbf{Step 2 }below.

We take $v=v(1)$ as another parameter, and set $w=uvu=\left(
\begin{array}
[c]{cc}%
0 & 1\\
-1 & 0
\end{array}
\right)  .$ Then $u(\lambda)^{w}=v(\lambda)$, so $V:=v(R)=U^{w}$ is definable.
Note the identity (for $\xi\in R^{\ast}$):%
\begin{equation}
h(\xi)=v(\xi)u(\xi^{-1})v(\xi)w^{-1}=w^{-1}u(\xi)w.u(\xi^{-1}).w^{-1}%
u(\xi).\label{h-identity}%
\end{equation}

\noindent\textbf{Step 1:} The ring isomorphism from $R$ to $U\subset
\mathrm{M}_{2}(R)$ is definable. Indeed, this is just the mapping%
\[
r\longmapsto\left(
\begin{array}
[c]{cc}%
1 & r\\
0 & 1
\end{array}
\right)  .
\]

\noindent\textbf{Step 2:} The map $\theta$ sending $g=(a,b;c,d)$ to
$(u(a),u(b);u(c),u(d))\in\Gamma^{4}$ is definable; this is a group isomorphism
when $U$ is identified with $R$ via $u(\lambda)\longmapsto\lambda$.

Assume for simplicity that $\Gamma=\mathrm{SL}_{2}(R)$. We start by showing
that the restriction of $\theta$ to each of the subgroups $U,~V\,,~H$ is
definable. Recall that $u(0)=1$ and $u(1)=u$.

If $g\in U$ then $g\theta=(u,g;1,u)$. If $g=v(-\lambda)\in V$ then
$g^{-w}=u(\lambda)\in U$ and $g\theta=(u,1;g^{-w},u).$

Suppose $g=h(\xi)\in H$. Then $g=w^{-1}xwyw^{-1}x$ where $x=u(\xi),$
$y=u(\xi^{-1}),$ and $g\theta=(y,1;1,x)$. So $g\theta=(y_{1},y_{2};y_{3}%
,y_{4})$ if and only if
\begin{align*}
~y_{4}\ast y_{1}  &  =u,~y_{2}=y_{3}=1,\\
g  &  =w^{-1}y_{4}wy_{1}w^{-1}y_{4}.
\end{align*}
Thus the restriction of $\theta$ to $H$ is definable.

Next, set%
\[
W:=\left\{  x~\in\Gamma\mid x_{\mathfrak{p}}\in\{1,w\}\text{ for every
}\mathfrak{p}~\right\}  .
\]
To see that $W$ is definable, observe that an element $x$ is in $W$ if and
only if there exist $y,z\in u(R_{\{0,1\}})$ such that
\[
x=yz^{w}y\text{ and }x^{4}=1.
\]
Note that $u(R_{\{0,1\}})$ is definable by (the proof of) Proposition \ref{P1}.

Put%
\[
\Gamma_{1}=\{g\in\Gamma\mid g_{11}\in R^{\ast}\}.
\]

If $g=(a,b;c,d)\in\Gamma_{1}$ then $g=\widetilde{v}(g)\widetilde{h}%
(g)\widetilde{u}(g)$ where%
\begin{align*}
\widetilde{v}(g)  &  =v(-a^{-1}c)\in V\\
\widetilde{h}(g)  &  =h(a^{-1})\in H\\
\widetilde{u}(g)  &  =u(a^{-1}b)\in U.
\end{align*}
This calculation shows that in fact $\Gamma_{1}=VHU$, so $\Gamma_{1}$ is
definable; these three functions on $\Gamma_{1}$ are definable since%
\begin{align*}
x  &  =\widetilde{v}(g)\Longleftrightarrow x\in V\cap HUg\\
y  &  =\widetilde{u}(g)\Longleftrightarrow y\in U\cap HVg\\
z  &  =\widetilde{h}(g)\Longleftrightarrow z\in H\cap VgU.
\end{align*}

Let $g=(a,b;c,d)$. Then $gw=(-b,a;-d,c)$. We claim that there exists $x\in W$
such that $gx\in\Gamma_{1}$. Indeed, this may be constructed as follows: If
$a_{\mathfrak{p}}\in\mathfrak{o}_{\mathfrak{p}}^{\ast}$ take $x_{\mathfrak{p}%
}=1$. If $a_{\mathfrak{p}}\in\mathfrak{po}_{\mathfrak{p}}$ and
$b_{\mathfrak{p}}\in\mathfrak{o}_{\mathfrak{p}}^{\ast}$ take $x_{\mathfrak{p}%
}=w$. If both fail, take $x_{\mathfrak{p}}=1$ when $a_{\mathfrak{p}}\neq0$ and
$x_{\mathfrak{p}}=w$ when $a_{\mathfrak{p}}=0$ and $b_{\mathfrak{p}}\neq0$.
This covers all possibilities since for almost all $\mathfrak{p}$ at least one
of $a_{\mathfrak{p}}$, $b_{\mathfrak{p}}$ is a unit in $\mathfrak{o}%
_{\mathfrak{p}},$ and $a_{\mathfrak{p}}$, $b_{\mathfrak{p}}$ are never both zero.

As $gx\in\Gamma_{1},$ we may write%
\[
gx=\widetilde{v}(gx)\widetilde{h}(gx)\widetilde{u}(gx)\text{.}%
\]

We claim that the restriction of $\theta$ to $W$ is definable. Let $x\in W$
and put $P=\{\mathfrak{p}~\mid~x_{\mathfrak{p}}=1\},$ $Q=\{\mathfrak{p}%
~\mid~x_{\mathfrak{p}}=w\}$. Then $(u^{x})_{\mathfrak{p}}$ is $u$ for
$\mathfrak{p}\in P$ and $v$ for $\mathfrak{p}\in Q$, so $u^{x}\in\Gamma_{1}$
and
\[
\widetilde{u}(u^{x})_{\mathfrak{p}}=\left\{
\begin{array}
[c]{ccc}%
u &  & (\mathfrak{p}\in P)\\
1 &  & (\mathfrak{p}\in Q)
\end{array}
\right.  .
\]
Recalling that $u=u(1)$ and $1=u(0)$ we see that%
\[
x\theta=\left(
\begin{array}
[c]{cc}%
\widetilde{u}(u^{x}) & \widetilde{u}(u^{x})^{-1}u\\
u^{-1}\widetilde{u}(u^{x}) & \widetilde{u}(u^{x})
\end{array}
\right)  .
\]

We can now deduce that $\theta$ is definable. Indeed, $g\theta=A$ holds if and
only if there exists $x\in W$ such that $gx\in\Gamma_{1}$ and
\[
A.x\theta=\widetilde{v}(gx)\theta.\widetilde{h}(gx)\theta.\widetilde{u}%
(gx)\theta
\]
(of course the products here are matrix products, definable in the language of
$\Gamma$ in view of Proposition \ref{P1}).

This completes the proof of Theorem \ref{sl2}\ for $\Gamma=\mathrm{SL}_{2}%
(R)$. When $\Gamma=\mathrm{SL}_{2}(R)/Z$, the same formulae now define
$\theta$ as a map from $\Gamma$ into the set of $2\times2$ matrices with
entries in $U$ modulo the appropriate definable equivalence relation.
$\blacksquare$

\bigskip Now we turn to the proof of Theorem \ref{high_rank}. This largely
follows \cite{ST}, \S \S 3, 4, but is simpler because we are dealing here with
`nice' rings. Henceforth $G$ denotes a simple Chevalley-Demazure group scheme
of rank at least $2$. The root subgroup associated to a root $\alpha$ is
denoted $U_{\alpha}$, and $Z$ denotes the centre of $G$. Put $\Gamma=G(R)$.

Let $S$ be any integral domain with infinitely many units. According to
\cite{ST}, Theorem 1.5 we have%
\[
U_{\alpha}(S)Z(S)=\mathrm{Z}\left(  C_{G(S)}(v)\right)
\]
whenever $1\neq v\in U_{\alpha}(S).$ This holds in particular for the rings
$S=\mathfrak{o}_{\mathfrak{p}}$. Take $u_{\alpha}\in U_{\alpha}(R)$ to have
$\mathfrak{p}$-component $x_{\alpha}(1)$ for each $\mathfrak{p}\in\mathcal{P}$
(or every $\mathfrak{p}$ when $R=\mathbb{A}$); then%
\[
U_{\alpha}(R)Z(R)=\mathrm{Z}\left(  C_{G(R)}(u_{\alpha})\right)  .
\]
Given this, the proof of Corollary 1.6 of \cite{ST} now shows that $U_{\alpha
}(R)$ is a definable subgroup of $\Gamma$; the result is stated for integral
domains but the argument remains valid, noting that in the present case $R/2R$
is finite.

Associated to each root $\alpha$ there is a morphism $\varphi_{\alpha
}:\mathrm{SL}_{2}\rightarrow G$ sending $u(r)=\left(
\begin{array}
[c]{cc}%
1 & r\\
0 & 1
\end{array}
\right)  $ to $x_{\alpha}(r)$ and $v(r)=\left(
\begin{array}
[c]{cc}%
1 & 0\\
r & 1
\end{array}
\right)  $ to $x_{-\alpha}(r)$ (\cite{S}, Chapter 3). This morphism is defined
over $\mathbb{Z}$ and satisfies
\[
K_{\alpha}:=\mathrm{SL}_{2}(R)\varphi_{\alpha}\leq G(R).
\]

\begin{lemma}
\label{Klemma}$K_{\alpha}=U_{-\alpha}(R)U_{\alpha}(R)U_{-\alpha}(R)U_{\alpha
}(R)U_{-\alpha}(R)U_{\alpha}(R)U_{-\alpha}(R)U_{\alpha}(R).$
\end{lemma}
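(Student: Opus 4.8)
The plan is to transport the whole question back to $\mathrm{SL}_{2}(R)$ through $\varphi_{\alpha}$ and to prove there the set identity
\[
\mathrm{SL}_{2}(R)=VUVUVUVU,
\]
where $U=u(R)$ and $V=v(R)$ are the upper- and lower-unipotent subgroups, so that $U\varphi_{\alpha}=U_{\alpha}(R)$ and $V\varphi_{\alpha}=U_{-\alpha}(R)$. Since $\varphi_{\alpha}$ is a group homomorphism with image $K_{\alpha}$, applying it to both sides converts this directly into the asserted factorisation of $K_{\alpha}$: the image of a product of subgroups is the product of their images, and we never need $\varphi_{\alpha}$ to be injective.

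The heart of the matter is a local factorisation of \emph{fixed} length. For a single coordinate ring $\mathcal{O}\in\{\mathfrak{o}_{\mathfrak{p}},K_{\mathfrak{p}}\}$ and $g=\bigl(\begin{smallmatrix}a&b\\c&d\end{smallmatrix}\bigr)\in\mathrm{SL}_{2}(\mathcal{O})$, reduction modulo the maximal ideal shows that at least one of $a,c$ is a unit, since the reduction lies in $\mathrm{SL}_{2}$ of the residue field and its first column is nonzero. If $c$ is a unit I would use the Bruhat-type identity $g=u(x)v(\gamma)u(y)$ with $x,y,\gamma\in\mathcal{O}$ obtained by Gaussian elimination, landing in $UVU$. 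If $c$ is a non-unit then $a$ is a unit and $g=v(\cdot)\,h(a^{-1})\,u(\cdot)$ lies in $VHU$; here I would expand the torus element $h(a^{-1})$ into unipotents by means of identity (\ref{h-identity}), in which the Weyl element $w^{-1}=u(-1)v(-1)u(-1)$ is itself a product of three unipotent factors, so that any $h(\xi)$ becomes a word of shape $VUVUVU$. Merging the outer factors of $VHU$ then yields a word of shape $VUVUVU$ as well. In either case, padding with trivial factors $u(0)=v(0)=1$ places $g$ inside the single length-eight word $VUVUVUVU$; the shape of this word is the same in both cases, and when $\mathcal{O}=\mathfrak{o}_{\mathfrak{p}}$ all the resulting entries lie in $\mathfrak{o}_{\mathfrak{p}}$.

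Finally I would glue over the places. For arbitrary $g\in\mathrm{SL}_{2}(R)$ I decompose each component $g_{\mathfrak{p}}$ as above using the common word $VUVUVUVU$, and collect the $i$-th lower (resp. upper) entry over all $\mathfrak{p}$ into a single $r_{i}$ (resp. $s_{i}$). For $R=\prod_{\mathfrak{p}}\mathfrak{o}_{\mathfrak{p}}$ these families are automatically elements of $R$; for $R=\mathbb{A}$ one must verify that the collected entries are adèles, i.e. integral at almost all places, and this holds precisely because at every place where $g_{\mathfrak{p}}$ is already integral the local-ring decomposition outputs entries in $\mathfrak{o}_{\mathfrak{p}}$, so non-integral entries can arise only at the finitely many places where $g_{\mathfrak{p}}\notin\mathrm{SL}_{2}(\mathfrak{o}_{\mathfrak{p}})$. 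Hence $g=v(r_{1})u(s_{1})\cdots v(r_{4})u(s_{4})\in VUVUVUVU$, which gives the identity in $\mathrm{SL}_{2}(R)$ and hence the lemma. The main obstacle, and the point where the hypotheses on $R$ genuinely enter, is exactly this gluing: one needs a single, place-independent word length so that components can be assembled coordinatewise, and one needs the decompositions at the good (integral) places to remain integral so that the assembled entries stay in $R$. Both are guaranteed by the fixed-length local factorisation above.
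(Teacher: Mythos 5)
Your proposal is correct and is essentially the paper's own argument: both reduce the lemma to the fixed-length identity $\mathrm{SL}_{2}(R)=VUVUVUVU$ and push it through $\varphi_{\alpha}$ (injectivity indeed being irrelevant), with the torus factor expanded via (\ref{h-identity}) and $w=uvu$, and with a place-by-place unit case analysis glued coordinatewise, your integrality check at the good places being exactly what makes the case $R=\mathbb{A}$ work. The paper merely compresses this into one line by citing the machinery already set up in the proof of Theorem \ref{sl2} (the decomposition $\Gamma_{1}=VHU$ and the $W$-trick for moving a general element into $\Gamma_{1}$), where your self-contained first-column case split and padding to a common length-eight word spell out the same local--global factorisation.
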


\begin{proof}
This follows from the corresponding identity in $\mathrm{SL}_{2}(R),$ which in
turn follows from (\ref{h-identity}) and the fact that $w=uvu$.
\end{proof}

\bigskip

We may thus infer that each $K_{\alpha}$ is a definable subgroup of $G(R)$.
Fixing a root $\gamma$, we identify $R$ with $U_{\gamma}(R)$ by $r\longmapsto
r^{\prime}=x_{\gamma}(r).$ Proposition \ref{P1} now shows that $R$ is
definable in $G(R)$.

As above, $G(R)$ is $R$-definable as a set of $d\times d$ matrices that
satisfy a family of polynomial equations over $\mathbb{Z}$, with group
operation matrix multiplication.

To complete the proof we need to establish

\medskip\textbf{1.} The ring isomorphism $R\rightarrow U_{\gamma
}(R);~r\longmapsto r^{\prime}=x_{\gamma}(r)\in\mathrm{M}_{d}(R)$ is definable
in ring language. This follows from the definition%
\[
x_{\gamma}(r)=\exp(rX_{\gamma})=1+rM_{1}(\gamma)+\ldots+r^{q}M_{q}(\gamma)
\]
where each $M_{i}(\gamma)$ is a matrix with integer entries (\cite{S}, Chaps.
2, 3).

\bigskip

\textbf{2.} The group isomorphism $\theta:G(R)\rightarrow G(R^{\prime})
\subseteq\mathrm{M}_{d}(U_{\gamma}(R))$ is definable in group language.

\medskip

To begin with, Lemma 3.5 of \cite{ST} shows that for each root $\alpha$, the
restriction of $\theta$ to $U_{a}(R)$ is definable (this is established for
$R$ an integral domain, but the proof is valid in general). Next, we observe
that $G(R)$ has `finite elementary width' in the sense of \cite{ST}:

\begin{lemma}
There is is finite sequence of roots $\beta_{i}$ such that%
\[
G(R)=\prod_{i=1}^{N}U_{\beta_{i}}(R).
\]

\end{lemma}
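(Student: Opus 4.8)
The plan is to reduce this global assertion to a uniform local one and then patch over the places. Since $G$ is an affine $\mathbb{Z}$-scheme of finite type, it commutes with direct products, so for $R=\prod_{\mathfrak{p}\in\mathcal{P}}\mathfrak{o}_{\mathfrak{p}}$ one has $G(R)=\prod_{\mathfrak{p}}G(\mathfrak{o}_{\mathfrak{p}})$, and $U_{\beta}(R)=\prod_{\mathfrak{p}}U_{\beta}(\mathfrak{o}_{\mathfrak{p}})$ because $U_{\beta}\cong\mathbb{G}_a$; when $R=\mathbb{A}$ the same holds with the restricted product of the groups $G(K_{\mathfrak{p}})$ relative to the $G(\mathfrak{o}_{\mathfrak{p}})$. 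Under this identification the desired factorisation $G(R)=\prod_{i=1}^{N}U_{\beta_{i}}(R)$ is equivalent to the statement that one and the same finite sequence of roots $\beta_{1},\dots,\beta_{N}$ realises $G(\mathcal{O})=\prod_{i=1}^{N}U_{\beta_{i}}(\mathcal{O})$ simultaneously for every local ring $\mathcal{O}$ occurring as a component, i.e. for each $\mathfrak{o}_{\mathfrak{p}}$ and (in the adèlic case) each $K_{\mathfrak{p}}$, a field counting as the case of zero maximal ideal.

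First I would establish the local statement with a place-independent bound. This is exactly the finite elementary width proved in \cite{ST} for the rings $\mathfrak{o}_{\mathfrak{p}}$; the essential point for us is that the bound there depends only on the root system $\Phi$ and not on the place. Indeed, since $G$ is smooth the reduction $G(\mathcal{O})\to G(k)$ modulo the maximal ideal is onto, over the residue field $k$ a Bruhat decomposition expresses $G(k)$ as a product of a $\Phi$-bounded number of root subgroups — using $w_{\alpha}(t)=x_{\alpha}(t)x_{-\alpha}(-t^{-1})x_{\alpha}(t)$ and $h_{\alpha}(t)=w_{\alpha}(t)w_{\alpha}(1)^{-1}$ to absorb the Weyl and torus contributions — and any element congruent to $1$ modulo the maximal ideal lies in the big cell $U^{-}(\mathcal{O})T(\mathcal{O})U(\mathcal{O})$, hence is again a $\Phi$-bounded product of root elements. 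All of these widths count only root-subgroup factors, so the resulting $N$ and the sequence $\beta_{1},\dots,\beta_{N}$ may be chosen uniformly. Crucially, the reduction procedure over $\mathcal{O}$ produces parameters in $\mathcal{O}$ itself; in particular whenever a component lies in $G(\mathfrak{o}_{\mathfrak{p}})$ its factorisation can be taken with entries in $\mathfrak{o}_{\mathfrak{p}}$.

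I would then patch. Applying the uniform local factorisation componentwise — over $\mathfrak{o}_{\mathfrak{p}}$ at the almost all places where the given component is integral, and over $K_{\mathfrak{p}}$ at the finitely many remaining ones — and concatenating along the common root sequence, I recover a single expression $g=\prod_{i=1}^{N}x_{\beta_{i}}(r_{i})$. The integral-preservation just noted guarantees that $r_{i,\mathfrak{p}}\in\mathfrak{o}_{\mathfrak{p}}$ for almost all $\mathfrak{p}$, so each $r_{i}$ is a genuine element of $R$ (an adèle when $R=\mathbb{A}$), whence $g\in\prod_{i=1}^{N}U_{\beta_{i}}(R)$, as required.

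The main obstacle I anticipate is the uniformity of the width across all residue fields $k_{\mathfrak{p}}$, and in particular across the small finite fields: one must know that the finite groups of Lie type $G(k_{\mathfrak{p}})$ are products of root subgroups with a bound depending only on $\Phi$ rather than on $\lvert k_{\mathfrak{p}}\rvert$, and that the finitely many degenerate small cases do not disturb the common value of $N$. Once this uniformity, together with the integral-preservation needed for the adèlic patching, is in hand, the remainder is bookkeeping with the product structure.
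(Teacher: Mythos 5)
Your proof is essentially correct, but it reaches the uniform local width by a genuinely different route from the paper. The paper does not argue via Hensel lifting and the Bruhat decomposition at all: it quotes Corollary 2 to Theorem 18 of \cite{S}, Chapter 7, which gives that over a PID $G(\mathcal{O})$ is generated by the subgroups $K_{\alpha}=\mathrm{SL}_{2}(\mathcal{O})\varphi_{\alpha}$, observes that the proof there actually bounds the length of such products by a constant $N_{1}$ depending only on the root system (the number of positive roots, plus the number of fundamental roots, plus the maximal length of a Weyl element), and then converts each $K_{\alpha}$ into a product of eight alternating root subgroups $U_{\pm\alpha}(\mathcal{O})$ via Lemma \ref{Klemma}, i.e.\ via the identity (\ref{h-identity}) and $w=uvu$; the passage to $R$ is then the same componentwise patching you perform, using that each $\mathfrak{o}_{\mathfrak{p}}$ is a PID or a field, with $N=8nN_{1}$. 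Your substitute for the citation --- surjectivity of $G(\mathfrak{o}_{\mathfrak{p}})\rightarrow G(k_{\mathfrak{p}})$ by smoothness, Bruhat over the residue field with Weyl and torus contributions absorbed by $w_{\alpha}(t)=x_{\alpha}(t)x_{-\alpha}(-t^{-1})x_{\alpha}(t)$ and $h_{\alpha}(t)=w_{\alpha}(t)w_{\alpha}(1)^{-1}$, and the big cell $U^{-}(\mathcal{O})T(\mathcal{O})U(\mathcal{O})$ for the congruence kernel --- is a correct and standard argument, but note that it leans on the $\mathfrak{o}_{\mathfrak{p}}$ being \emph{complete} local rings (Hensel), whereas Steinberg's result applies to arbitrary PIDs; in exchange, your version is self-contained and treats the field components $K_{\mathfrak{p}}$ of $\mathbb{A}$ directly by Bruhat, and your insistence on integral parameters at almost all places makes explicit the restricted-product bookkeeping that the paper leaves implicit in the phrase ``the same holds for our ring $R$ in general.'' The obstacle you flag at the end is not actually an obstacle: for the (simply connected) Chevalley groups in question, the Bruhat decomposition and the generation of $T(k)$ by the $h_{\alpha}(t)$ hold over \emph{every} field, including $\mathbb{F}_{2}$ and $\mathbb{F}_{3}$, with widths counted purely by root-system data, so uniformity of $N$ across residue fields is automatic --- indeed your count (positive roots for $U^{\pm}$, fundamental roots for the torus, length of the longest Weyl element) is exactly the paper's $N_{1}$. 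Both arguments also need the padding-by-identity device (inserting $x_{\beta}(0)$ factors) to force a single common sequence $\beta_{1},\ldots,\beta_{N}$; you gesture at this with ``concatenating along the common root sequence,'' which is fine but worth stating.
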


\begin{proof}
This relies on results from Chapter 7 of \cite{S}. Specifically, Corollary 2
to Theorem 18 asserts that if $R$ is a PID, then (in the above notation)
$G(R)$ is generated by the groups $K_{\alpha}$. It is clear from the proof
that each element of $G(R)$ is in fact a product of bounded length of elements
from various of the $K_{\alpha}$; an upper bound is given by the sum $N_{1}$,
say, of the following numbers: the number of positive roots, the number of
fundamental roots, and the maximal length of a Weyl group element as a product
of fundamental reflections. If the positive roots are $\alpha_{1}%
,\ldots,\alpha_{n}$ it follows (if $R$ is a PID) that
\[
G(R)=\left(  \prod_{j=1}^{n}K_{\alpha_{j}}\right)  \cdot\left(  \prod
_{j=1}^{n}K_{\alpha_{j}}\right)  \cdot\ldots\cdot\left(  \prod_{j=1}%
^{n}K_{\alpha_{j}}\right)  ~\ \text{(}N_{1}\text{ factors).}%
\]
As each of the rings $\mathfrak{o}_{\mathfrak{p}}$ is a PID (or a field), the
same holds for our ring $R$ in general.

The result now follows by Lemma \ref{Klemma}, taking $N=8nN_{1}$.
\end{proof}

\bigskip

Thus $\theta$ is definable as follows: for $g\in G(R)$ and $A\in\mathrm{M}%
_{d}(U_{\gamma}(R)),$ $g\theta=A$ if and only if there exist $v_{i}\in
U_{\beta_{i}}(R)$ and $A_{i}\in\mathrm{M}_{d}(U_{\gamma}(R))$ such that
$g=v_{1}\ldots v_{N}$, $A=A_{1}\cdot\ldots\cdot A_{N}$ and $A_{i}=v_{i}\theta$
for each $i$. Here $A_{1}\cdot A_{2}$ etc denote matrix products, which are
definable in the language of $G$ because the ring operations on $R^{\prime
}=U_{\gamma}(R)$ are definable in $G$.

This completes the proof.

\medskip

\emph{Acknowledgment.} \ Thanks to Jamshid Derakhshan for references and advice.

\end{document}